\newcommand\nc{\newcommand}
 \nc{\binn}{{\binom{2n}{n}}}
\def\int{\displaystyle\!int}
\def\lim{\displaystyle\!lim}
\def\sum{\displaystyle\!sum}
\def\sup{\displaystyle\!sup}
\def\inf{\displaystyle\!inf}
\def\cap{\displaystyle\!cap}
\def\max{\displaystyle\!max}
\def\min{\displaystyle\!min}
\def\frac{\displaystyle\!frac}
\let\oldsection\section
\renewcommand\section{\setcounter{equation}{0}\oldsection}
\DeclareMathOperator{\Li}{Li}
 \nc{\gam}{{\gamma}}
 \nc{\gG}{{\Gamma}}
 \nc{\vep}{{\varepsilon}}
 \nc{\gs}{{\sigma}}
 \nc{\gth}{{\theta}}
 \nc{\gS}{{\Sigma}}
 \nc{\gf}{{\varphi}}
 \nc{\gk}{{\kappa}}
 \nc{\gz}{{\zeta}}
 \nc{\tgz}{{\tilde{\zeta}}}
 \nc{\gO}{{\Omega}}
 \nc{\sif}{{\mathcal S}}
 \nc{\gt}{{\tau}}
\def\N{\mathbb{N}}
\def\ze{\zeta}
\theoremstyle{plain}
\newtheorem{thm}{Theorem}[section]
\newtheorem{lem}[thm]{Lemma}
\newtheorem{con}[thm]{Conjecture}
\theoremstyle{definition}
\newtheorem{re}[thm]{Remark}
\begin{document}
%%%%%%%%%%%%%%%%%%%% title %%%%%%%%%%%%%%%%%%%%%%%%%%%%%%%%%%%%%%%%%%%%%%%%
\title{\bf Sun's Three Conjectures on Ap\'{e}ry-like Sums Involving Harmonic Numbers}
\author{
{Ce Xu${}^{a,}$\thanks{Email: cexu2020@ahnu.edu.cn, first author, ORCID 0000-0002-0059-7420.}\ and Jianqiang Zhao${}^{b,}$\thanks{Email: zhaoj@ihes.fr, corresponding author, ORCID 0000-0003-1407-4230.}}\\[1mm]
\small a. School of Mathematics and Statistics, Anhui Normal University, Wuhu 241002, PRC\\
\small b. Department of Mathematics, The Bishop's School, La Jolla, CA 92037, USA}

\date{}
\maketitle

\noindent{\bf Abstract.} In this paper, we will give another proof of Zhi-Wei Sun's three conjectures
on Ap\'{e}ry-like sums involving harmonic numbers by proving some identities among special
values of multiple polylogarithms.

\medskip

\noindent{\bf Keywords}: Ap\'{e}ry-like sums, harmonic numbers, multiple polylogarithm function, iterated integrals.

\medskip
\noindent{\bf AMS Subject Classifications (2020):} 11M32, 11B65.

\section{Introduction}

In his new book \cite{Sun2021}, Prof. Zhi-Wei Sun listed 820 mathematical conjectures, including ten concerning the Ap\'{e}ry-like sums involving harmonic numbers, some of which had appeared in his previous paper \cite{Sun2015}. In this paper, we will prove the following three conjectures by using a few results of Akhilesh \cite{Akhilesh} and Davydychev--Kalmykov \cite{DavydychevDe2004}, together with Au's package \cite{Au2020,Au2021}.

\begin{con} \emph{(\cite{Sun2015,Sun2021})} We have
\begin{align*}
&\sum_{n=1}^\infty \frac{(-1)^{n-1}}{n^3 \binom{2n}{n}}\left(10H_{n}-\frac{3}{n}\right)=\frac{\pi^4}{30},\\
&\sum_{n=1}^\infty \frac{1}{n^2 \binom{2n}{n}}\left(3H_{n-1}^2+\frac{4}{n}H_{n-1}\right)=\frac{\pi^4}{360}, \\
&\sum_{n=1}^\infty \frac{(-1)^{n-1}}{n^3 \binom{2n}{n}}\left(H_{2n}+4H_n\right)=\frac{2\pi^4}{75},
\end{align*}
where $H_n$ is the classical harmonic number defined by
\begin{equation*}
H_0:=0\quad\text{and}\quad H_n:=\sum_{k=1}^n \frac{1}{k} \quad \forall n\ge 1.
\end{equation*}
\end{con}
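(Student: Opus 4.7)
The plan is to reduce each of the three evaluations to an identity among weight~4 colored multiple zeta values (CMZVs) at roots of unity, and then to verify that identity by a combination of Akhilesh's integral machinery, Davydychev--Kalmykov's hypergeometric expansions, and Au's reduction package.

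First I would convert the central binomial coefficients into iterated integrals via the Beta function identity $\tfrac{1}{n\binom{2n}{n}}=\int_0^1 x^{n-1}(1-x)^n\,dx$ and replace each harmonic number by its standard integral representation $H_n=-\int_0^1\frac{1-t^n}{1-t}\,dt$ (and similarly $H_{2n}$, with $t^{2n}=(t^2)^n$, which introduces the sixth-root-of-unity letters after a change of variable). For the three conjectures this produces iterated integrals whose letters lie in $\{0,\pm 1,\pm\tfrac12,(1\pm\sqrt{-3})/2\}$, so that after summation in $n$ each series becomes an explicit $\mathbb{Q}$-linear combination of CMZVs of weight~4 and level~dividing~6. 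Conjecture~1 (terms $H_n/(n^3\binom{2n}{n})$ and $1/(n^4\binom{2n}{n})$ with alternating sign) is purely of level~2 after Akhilesh's reduction; Conjecture~2 (with $H_{n-1}^2$) falls in the same setup but requires expanding the square $H_{n-1}^2=2\sum_{j<k<n}\tfrac{1}{jk}+\sum_{k<n}\tfrac{1}{k^2}$ before integration; Conjecture~3 (with $H_{2n}$ and alternating sign) is the one forcing sixth roots of unity to enter.

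Next I would invoke Akhilesh's framework \cite{Akhilesh}, which gives a clean dictionary between the central binomial sums $\sum \frac{(\pm1)^n H_n^{\text{stuff}}}{n^k\binom{2n}{n}}$ and iterated integrals of rational $1$-forms on $\mathbb{P}^1\setminus\{0,\pm1,\pm1/2,\ldots\}$, thereby writing each left-hand side as a concrete element of the $\mathbb{Q}$-vector space spanned by weight~4 CMZVs at the appropriate level. In parallel, the hypergeometric series manipulations of Davydychev--Kalmykov \cite{DavydychevDe2004} supply several closed-form evaluations of exactly the building blocks that appear, in particular the weight~4 pieces involving $\pi^4$, $\zeta(3)\log 2$, $\mathrm{Li}_4(1/2)$, etc. Finally I would feed the resulting identity into Au's package \cite{Au2020,Au2021}, which implements the full conjectural algebra of weight~$\le 4$ CMZVs at small levels and reduces any linear combination to a canonical basis; the conjectured right-hand side $\pi^4/30$, $\pi^4/360$, $2\pi^4/75$ then drops out as the coefficient of $\pi^4$, while all other basis coefficients must cancel.

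The main obstacle I expect is Conjecture~3. Because of the $H_{2n}$ factor together with the alternating sign, the natural iterated integral representation lives genuinely at level~6 (sixth roots of unity appear through $1-t^{2n}$ combined with $(-1)^n$ and $(t(1-t))^n$), and the weight~4 CMZV space at level~6 is large, so one must check a fairly long list of cancellations among non-trivial constants such as $\mathrm{Li}_4(1/2)$, $\mathrm{Li}_4(\omega)$, and products like $\zeta(3)\log 2$ before the answer collapses to a rational multiple of $\pi^4$; the Akhilesh dictionary and Davydychev--Kalmykov's formulas are precisely what is needed to see those cancellations explicitly. Conjectures~1 and~2 are of strictly lower complexity (level~2 only), so for them the analogous reduction is essentially mechanical once the integral representation is in place.
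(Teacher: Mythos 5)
Your overall strategy --- rewrite the sums as iterated integrals and reduce them via Akhilesh, Davydychev--Kalmykov, and Au's package --- is the strategy of the paper, but your identification of the constants that actually occur is wrong in a way that would derail the computation. After the Beta-integral substitution, the relevant algebraic locus for the \emph{alternating} sums is $1+x(1-x)=0$, whose roots are $\frac{1\pm\sqrt5}{2}$: the first and third conjectures live at the golden ratio $\varphi=\frac{\sqrt5-1}{2}$ (equivalently, in the Davydychev--Kalmykov parametrization, $u=-1$ gives $y=\varphi^2$), not at level $2$ and not at sixth roots of unity. It is the \emph{non-alternating} sums (your Conjecture~2) for which $1-x(1-x)=0$ would bring in $\frac{1\pm\sqrt{-3}}{2}$. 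So your claims that Conjecture~1 is ``purely of level 2'' and that Conjecture~3 ``forces sixth roots of unity'' are both incorrect; asking Au's package to reduce these sums to level-2 or level-6 CMZVs would fail, because the individual sums genuinely involve $\Li_4(\varphi)$, $\Li_4(\varphi^2)$, $\Li_3(\varphi)\ln\varphi$, $\zeta(3)\ln\varphi$, and so on, which cancel only in the particular combinations $10H_n-\tfrac3n$ and $H_{2n}+4H_n$.

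For comparison, the paper's route is more economical than the plan you describe: the first identity is an immediate linear combination of Au's closed forms for $\sum(-1)^nH_n/(n^3\binom{2n}{n})$ and $\sum(-1)^n/(n^4\binom{2n}{n})$, both expressed in golden-ratio polylogarithms; the second needs no integral representation at all, following from the quasi-shuffle identity $H_{n-1}^2=2\zeta_{n-1}(1,1)+H_{n-1}^{(2)}$ together with two ready-made identities of Akhilesh whose right-hand sides are already rational multiples of $\zeta(4)$; and the third is obtained by subtracting Davydychev--Kalmykov's formulas for $\sum u^nH_{n-1}/(n^3\binom{2n}{n})$ and $\sum u^nH_{2n-1}/(n^3\binom{2n}{n})$ at $u=-1$, $y=\varphi^2$, then invoking Landen-type evaluations of $\Li_2(\varphi^2)$, $\Li_3(\varphi^2)$, $\Li_{2,1}(\varphi^2)$, $\Li_{3,1}(\varphi^2)$. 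You would need to correct the arithmetic of the special points --- replacing your level-2/level-6 ansatz by golden-ratio polylogarithms --- before your reduction could actually be carried out.
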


\begin{re} The three conjectures were first proved by Chu in \cite[Example 3.2 and 3.3]{Chu2020} and \cite[Example 3.5 and 3.7]{Chu2021}. It should be emphasized that Chu also proved more similar values in the above two papers.
\end{re}

\section{Proof of conjectures}
The Riemann zeta values $\ze(k)\ (2\le k\in \N)$ are defined by
\begin{align}
\ze(k):=\sum_{n=1}^\infty \frac1{n^k}.
\end{align}
In particular, Euler determined the explicit values of zeta values function at even integers in 1775:
\begin{align*}
\zeta(2k)=-\frac{B_{2k}}{2(2k)!}(2\pi\sqrt{-1})^{2k},
\end{align*}
where $B_n$ are Bernoulli numbers defined by the generating function
\[\frac{t}{e^t-1}=\sum_{n=0}^\infty B_n\frac{t^n}{n!}.\]
These can be regarded as special values of the polylogarithm function $\Li_k(x)$. More generally,
for any $k_1,\dotsc,k_r\in\N$, the \emph{classical multiple polylogarithm function} is defined by
\begin{align*}
\Li_{k_1,\dotsc,k_r}(x_1,\dotsc,x_r):=\sum_{n_1>n_2>\cdots>n_r>0} \frac{x_1^{n_1}\dotsm x_r^{n_r}}{n_1^{k_1}\dotsm n_r^{k_r}}
\end{align*}
which converges if $|x_1\cdots x_j|<1$ for all $1\le j\le r$. In particular, for $|x|\le 1$ and $(x,k_1)\ne(1,1)$
\begin{equation*}
\Li_{k_1,\ldots,k_r}(x):=\Li_{k_1,\ldots,k_r}(x,\underbrace{1,\ldots,1}_{r-1})=
\sum_{n_1>n_2>\cdots>n_r>0} \frac{x^{n_1}}{n_1^{k_1}\dotsm n_r^{k_r}}
\end{equation*}
is the classical \emph{single-variable} multiple polylogarithm function. In particular, if $x=1$ then $\Li_{k_1,\ldots,k_r}(1)$ become the multiple zeta values $\ze(k_1,\ldots,k_r)$, namely, $\ze(k_1,\ldots,k_r):=\Li_{k_1,\ldots,k_r}(1)$.

\begin{lem} \emph{\cite[Eq. (122)]{Akhilesh}} We have
\begin{align}
&\sum_{n=1}^\infty \frac{H_{n-1}^{(2)}}{n^2\binom{2n}{n}}=\frac5{108}\ze(4),\label{exp-h2-1}\\
&2\sum_{n=1}^\infty \frac{H_{n-1}}{n^3\binom{2n}{n}}+3\sum_{n=1}^\infty \frac{\ze_{n-1}(1,1)}{n^2\binom{2n}{n}}=\frac1{18}\ze(4)\label{exp-h113-1},
\end{align}
where
\[H_{n-1}^{(2)}:=\sum_{k=1}^{n-1}\frac{1}{k^2}\quad \text{and}\quad\ze_{n-1}(1,1):=\sum_{n>k>j>0} \frac1{kj}.\]
\end{lem}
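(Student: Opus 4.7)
The plan is to follow the iterated integral machinery developed by Akhilesh in \cite{Akhilesh}; indeed both identities \eqref{exp-h2-1} and \eqref{exp-h113-1} are extracted from equation (122) of that paper, so I outline the strategy rather than reproduce the full derivation.

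First, I would express sums of the shape $\sum_{n\geq 1}\frac{x^n}{n^k\binom{2n}{n}}$ as iterated integrals over a small alphabet of one-forms. The natural starting point is the classical evaluation $\sum_{n\geq 1}\frac{x^n}{n^2\binom{2n}{n}} = 2\arcsin^2(\sqrt{x}/2)$, together with its integrals and derivatives, which represent the relevant generating series via iterated integration on $[0,1/2]$ against forms including $dt/t$ and $dt/(1-2t)$. Harmonic numerators $H_{n-1}$, $H_{n-1}^{(2)}$, and $\ze_{n-1}(1,1)$ are then incorporated by prepending additional integrations against $du/u$ and $du/(1-u)$, using the familiar generating-function identities $\sum_{n\geq 1} H_{n-1}^{(2)} u^n = u\,\Li_2(u)/(1-u)$ and $\sum_{n\geq 1} \ze_{n-1}(1,1) u^n = \frac{1}{2}u\log^2(1-u)/(1-u)$. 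This yields depth-$3$ and depth-$4$ iterated integrals of total weight $4$.

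Second, I would specialize to $x=1$, so that after the algebraic change of variable the integration path terminates at a point (a sixth root of unity under the standard normalization) where the integrand can be identified with a colored multiple polylogarithm value. The four series appearing in \eqref{exp-h2-1} and \eqref{exp-h113-1} thereby become $\Q$-linear combinations of colored multiple zeta values of weight $4$. By the explicit reductions catalogued in \cite{Akhilesh}, and independently verifiable via Au's package \cite{Au2020,Au2021}, the particular combinations that arise collapse onto the one-dimensional $\Q$-line spanned by $\ze(4)$, and the rational coefficients $5/108$ and $1/18$ are read off by tracking signs and multiplicities through the shuffle relations.

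The main obstacle in a self-contained proof is exactly this combinatorial reduction: one must verify that the specified $\Q$-linear combinations land in $\Q\,\ze(4)$ after cancellation of all other potential weight-$4$ constants, and that the divergence coming from the $dt/t$ factor at $t=0$ is regularized consistently. Since this reduction is the technical heart of Akhilesh's equation (122), I invoke that result directly rather than redo the bookkeeping here.
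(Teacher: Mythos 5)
The paper offers no proof of this lemma at all --- it is quoted verbatim from Akhilesh \cite[Eq.\ (122)]{Akhilesh} --- and your proposal likewise ends by invoking that same equation, so the two treatments coincide in substance. Your sketch of the surrounding machinery (the $\arcsin^2$ representation of $\sum_{n\ge1} x^n/(n^2\binom{2n}{n})$, the generating functions $\sum_n H_{n-1}^{(2)}u^n=\frac{u}{1-u}\Li_2(u)$ and $\sum_n \ze_{n-1}(1,1)u^n=\frac{u}{2(1-u)}\ln^2(1-u)$, and the specialization to a sixth root of unity) is an accurate description of how such weight-$4$ evaluations are obtained, but since the actual reduction is still outsourced to the citation, this is essentially the same (non-)proof the paper gives.
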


\begin{re}
The formula \eqref{exp-h2-1} has also already appeared in Hessami Pilehroods \cite{HP2012}, page 220, between (7) and (8).
\end{re}

\begin{lem} \emph{(\cite[Example 6.16 and 6.18]{Au2021})} Let $\gf:=\frac{\sqrt{5}-1}{2}$. Then we have
\begin{align}
&\sum_{n=1}^\infty \frac{(-1)^n}{n^3\binom{2n}{n}}H_n =\frac{12}{5}\Li_3(\gf)\ln(\gf)+\frac3{20}\Li_4(\gf^{2})-\frac{12}{5}\Li_4(\gf)-\frac6{25}\ze(3)\ln(\gf)
\nonumber\\&\quad\quad\quad\quad\quad\quad\quad+\frac{13}{20}\ln^4(\gf)-\frac7{50}\pi^2\ln^2(\gf)+\frac{\pi^4}{50},\label{para-harmon-exp-ale}\\
&\sum_{n=1}^\infty \frac{(-1)^n}{n^4\binom{2n}{n}}=8\Li_3(\gf)\ln(\gf)+\frac1{2}\Li_4(\gf^{2})-8\Li_4(\gf)-\frac4{5}\ze(3)\ln(\gf)
\nonumber\\&\quad\quad\quad\quad\quad\quad\quad+\frac{13}{6}\ln^4(\gf)-\frac7{15}\pi^2\ln^2(\gf)+\frac{7\pi^4}{90}.\label{para-harmon-exp-alt}
\end{align}
\end{lem}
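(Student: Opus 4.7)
The lemma is precisely Examples $6.16$ and $6.18$ of \cite{Au2021}, so my proof plan is to outline how Au's framework produces these closed forms. The starting point is the beta-integral representation
\[
\frac{1}{n\binom{2n}{n}} = \int_0^1 t^{n-1}(1-t)^n\,\dd t.
\]
Summing against $(-1)^n/n^3$ after swapping integral and series turns \eqref{para-harmon-exp-alt} into
\[
\sum_{n\ge 1}\frac{(-1)^n}{n^4\binom{2n}{n}} = \int_0^1 \frac{\Li_3(-t(1-t))}{t}\,\dd t,
\]
while \eqref{para-harmon-exp-ale} becomes an analogous weight-$4$ integral in which one additionally uses $\sum_{n\ge 1}H_n z^n = -\ln(1-z)/(1-z)$. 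The resulting integrands involve only elementary and polylogarithmic functions of the polynomial $-t(1-t)$.

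To pass to iterated integrals one factors the polynomials that carry the singular points of the integrand; in particular $1+t(1-t)=0$ has roots $t=(1\pm\sqrt{5})/2\in\{-\gf,\,1+\gf\}$, so the golden ratio is forced upon the problem by the singular locus of $\Li_3$ and $\ln$. After this substitution each sum becomes a $\Q$-linear combination of weight-$4$ multiple polylogarithms whose arguments lie in the set $\{\pm\gf^{\pm 1},\pm\gf^{\pm 2}\}$, i.e., of ``golden-section'' multiple polylogarithms in $\Q(\sqrt 5)$.

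The final and hardest step is to reduce each such combination to the basis
$
\{\Li_4(\gf),\,\Li_4(\gf^2),\,\Li_3(\gf)\ln\gf,\,\ze(3)\ln\gf,\,\ln^4\gf,\,\pi^2\ln^2\gf,\,\pi^4\}
$
appearing on the right-hand sides of \eqref{para-harmon-exp-ale} and \eqref{para-harmon-exp-alt}. This requires simultaneously invoking all weight-$4$ linear relations among golden-section polylogarithms, namely shuffle, stuffle, distribution, and the Landen-type golden ladders; to carry this out by hand would require chasing dozens of functional equations. It is precisely this reduction that Au's package \cite{Au2020,Au2021} automates, and running it on the two iterated-integral expressions above returns the two stated closed forms.
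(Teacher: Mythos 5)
The paper offers no proof of this lemma beyond the citation to \cite[Examples 6.16 and 6.18]{Au2021}, and your proposal---after a correct setup via the beta integral $\frac{1}{n\binom{2n}{n}}=\int_0^1 t^{n-1}(1-t)^n\,dt$ and the observation that the singular locus of $1+t(1-t)$ at $t\in\{-\gf,1+\gf\}$ forces the golden ratio into the answer---likewise delegates the decisive weight-four reduction to Au's package, so the two arguments rest on exactly the same source and are essentially the same. Nothing in your sketch is wrong, but the one genuinely hard step (reducing the golden-section weight-four polylogarithms to the basis $\Li_4(\gf)$, $\Li_4(\gf^2)$, $\Li_3(\gf)\ln\gf$, $\ze(3)\ln\gf$, $\ln^4\gf$, $\pi^2\ln^2\gf$, $\pi^4$) remains uncarried-out in both your write-up and the paper.
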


\begin{lem} \emph{(\cite[(3.12)-(3.13)]{DavydychevDe2004})} For $u\in (-\infty,0)\cup (4,+\infty)$, define
\begin{equation*}
y:=\frac{1-\sqrt{\tfrac{u}{u-4}}}{1+\sqrt{\tfrac{u}{u-4}}}, \qquad
H_{-1,0,0,1}(-y):=\int_0^{-y} \frac{\Li_{3}(x)}{1+x}dx.
\end{equation*}
Then we have
\begin{eqnarray}
&& \hspace*{-10mm}
\sum_{n=1}^\infty \frac{u^n}{n^3\binom{2n}{n}}H_{n-1}
 =
 4 H_{-1,0,0,1}(-y)
+ \Li_{3,1}(y^2)
- 4 \Li_{3,1}(y)
- 4 \Li_{3,1}(-y)
- 6 \Li_{4}(-y)
\nonumber \\ &&
- 2 \Li_{4}(y)
+ 4 \Li_{2,1}(-y)  \ln y
+ 4 \Li_{2,1}(y)  \ln y
- 2 \Li_{2,1}(y^2)  \ln(y)
+ 4 \Li_{3}(-y) \ln (1-y)
\nonumber \\ &&
+ 2 \Li_{3}(-y) \ln y
+ 2 \Li_{3}(y)  \ln y
- \Li_{2}(y) \ln^2 y
- 4 \Li_{2}(-y) \ln y \ln (1-y)
\nonumber \\ &&
- \tfrac{1}{3} \ln^3 y \ln (1-y)
+ \tfrac{1}{24} \ln^4 y
+ 2 \zeta(2) \Li_{2}(y)
- \tfrac{1}{2} \zeta(2) \ln^2 y
+ 2 \zeta(2) \ln y \ln (1-y)
\nonumber \\ &&
+ 6 \zeta(3) \ln (1-y)
- 3 \zeta(3) \ln y
- 4 \zeta(4) \; ,\label{AN_Sb1_3}
\\ && \hspace*{-10mm}
%%%%%%%%%%%%%%%%%%%%%%%%%%%%%%%%%%%%%%%%%%%%%%%%%%%%%%%%%%%%%%%%%%%%%%%%%%%
\sum_{j=1}^\infty \frac{u^n}{n^3\binom{2n}{n}}H_{2n-1}
 =
 4 H_{-1,0,0,1}(-y)
+ \Li_{3,1}(y^2)
- 8 \Li_{3,1}(y)
- 4 \Li_{3,1}(-y)
- 6 \Li_{4}(-y)
\nonumber \\ &&
+ 2 \Li_{4}(y)
- \left[ \Li_{2}(y) \right]^2
+ 4 \Li_{2,1}(-y)  \ln y
+ 8 \Li_{2,1}(y)  \ln y
- 2 \Li_{2,1}(y^2) \ln y
+ \tfrac{1}{48} \ln^4 y
\nonumber \\ &&
+ 4 \Li_{3}(-y) \ln (1-y)
- 4 \Li_{3}(y) \ln (1-y)
+ 2 \Li_{3}(-y) \ln y
- 4 \Li_{2}(-y) \ln y \ln (1-y)
\nonumber \\ &&
+ 2 \Li_{2}(y) \ln y \ln (1-y)
- \tfrac{1}{2} \Li_{2}(y) \ln^2 y
- \tfrac{1}{6} \ln^3 y \ln (1-y)
+ 4 \zeta(2) \ln y \ln (1-y)
\nonumber \\ &&
- \zeta(2) \ln^2 y
+ 10 \zeta(3) \ln (1-y)
- 5 \zeta(3) \ln y
+ 4 \zeta(2) \Li_{2}(y)
- \tfrac{19}{2} \zeta(4). \label{AN_Sb1_4}
\end{eqnarray}
\end{lem}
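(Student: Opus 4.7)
The plan is to derive both identities from the Beta-integral representation
\begin{equation*}
\frac{1}{n\binom{2n}{n}} = \frac{1}{2}\int_0^1 [x(1-x)]^{n-1}\,dx,
\end{equation*}
which converts each central-binomial sum into a one-dimensional integral. Substituting this into the left-hand side of \eqref{AN_Sb1_3} and swapping sum with integral gives
\begin{equation*}
\sum_{n=1}^\infty \frac{u^n H_{n-1}}{n^3\binom{2n}{n}} = \frac{1}{2}\int_0^1 \frac{\Li_{2,1}(ux(1-x))}{x(1-x)}\,dx,
\end{equation*}
using the identity $\Li_{2,1}(z)=\sum_{n\ge 1} H_{n-1}z^n/n^2$. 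For \eqref{AN_Sb1_4} the corresponding generating function $\sum_{n\ge 1} H_{2n-1}z^n/n^2$ is handled by splitting $H_{2n-1}$ into its odd- and even-indexed contributions, which reduces it to a combination of $\Li_{2,1}(\pm\sqrt z)$ and $\Li_3(\pm\sqrt z)$.

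The central step is the change of variables. Writing $1-ux(1-x)=u(x-x_+)(x_--x)$ with $x_\pm=(1\pm\sqrt{1-4/u})/2$, one uniformises by
\begin{equation*}
y = \frac{1-\sqrt{u/(u-4)}}{1+\sqrt{u/(u-4)}},
\end{equation*}
so that the relevant polynomial factors linearly over $\Q(y)$. Under this substitution the form $\Li_{2,1}(ux(1-x))/[x(1-x)]\,dx$ pulls back to a rational differential on $\mathbb{P}^1_y$ with singularities only in $\{0,\pm 1,\infty\}$, and the one-dimensional integration in $x$ translates into an iterated integral along a path from $0$ to $-y$; by definition this is a $\Q$-linear combination of multiple polylogarithms in the alphabet $\{y,-y,y^2\}$.

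After expansion one obtains a weight-four combination of $\Li_4(\pm y)$, $\Li_{3,1}(\pm y)$, $\Li_{3,1}(y^2)$, products of lower-depth polylogarithms with $\ln y$ and $\ln(1-y)$, the non-reducible Goncharov integral $H_{-1,0,0,1}(-y)$, and constant multiples of $\zeta(2)\Li_2(y)$, $\zeta(3)\ln(1-y)$ and $\zeta(4)$. Reducing to the exact shape stated in the lemma is then a finite calculation using the shuffle product of iterated integrals, the depth-one doubling relation $\Li_k(y^2)=2^{k-1}[\Li_k(y)+\Li_k(-y)]$, and its depth-two analogue, together with the inversion $\ln(1-y^2)=\ln(1-y)+\ln(1+y)$.

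The main obstacle is bookkeeping the shuffle and doubling reduction while keeping the single irreducible integral $H_{-1,0,0,1}(-y)$ intact: the alphabet produces many weight-four relations, and one must verify that modulo these the combination on the right-hand sides of \eqref{AN_Sb1_3}--\eqref{AN_Sb1_4} is indeed the unique representative produced by the integration. A secondary analytic issue is that for $u\in(-\infty,0)\cup(4,+\infty)$ the argument $ux(1-x)$ leaves the unit disc for most $x\in(0,1)$, so the generating-function identity for $\Li_{2,1}$ must first be replaced by its iterated-integral definition on the cut plane, and branches must be tracked consistently through the square root defining $y$.
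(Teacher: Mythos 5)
First, a point of comparison: the paper does not prove this lemma at all --- it is imported verbatim from Davydychev--Kalmykov \cite[(3.12)--(3.13)]{DavydychevDe2004}, so there is no internal proof to measure your argument against. That said, your plan follows essentially the route by which the result was originally obtained: the Beta-integral representation $\tfrac{1}{n\binom{2n}{n}}=\tfrac12\int_0^1[x(1-x)]^{n-1}\,dx$, the generating function $\Li_{2,1}(z)=\sum_{n\ge1}H_{n-1}z^n/n^2$, and the uniformization $u=-(1-y)^2/y$, under which $dx/[x(1-x)]$ pulls back to forms with poles in $\{0,\pm1,\infty\}$ and the integral lands in exactly the class of functions on the right-hand sides, including the single irreducible piece $H_{-1,0,0,1}(-y)$. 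So the strategy is sound and is the standard one.

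As a proof, however, the proposal has a genuine gap: everything that makes \eqref{AN_Sb1_3} and \eqref{AN_Sb1_4} nontrivial --- the precise rational coefficients such as $-\tfrac{19}{2}\zeta(4)$, the exact mix of $\Li_{3,1}(\pm y)$ versus $\Li_{3,1}(y^2)$, and the boundary contributions at $x=0,1$ that produce the pure $\zeta$-terms --- is deferred to ``a finite calculation'' that is never carried out, and that calculation is the entire content of the lemma. Two specific steps also need more than you give them. For \eqref{AN_Sb1_4}, the relevant generating function is $\sum_n H_{2n-1}z^n/n^2=2\bigl[\Li_{2,1}(\sqrt z)+\Li_{2,1}(-\sqrt z)\bigr]$ with $z=ux(1-x)$, and $\sqrt{ux(1-x)}$ is \emph{not} rational in your uniformizing variable (indeed it is purely imaginary for $u<0$), so the claimed reduction to the alphabet $\{y,-y,y^2\}$ does not follow from the substitution you describe; the source handles this sum by a separate parametrization. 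Second, the termwise interchange of sum and integral is only valid for $|u|<4$, since $|ux(1-x)|$ can exceed $1$ on the stated range; the clean fix is to prove the identity for $u\in(-4,0)$ and extend by analytic continuation in $u$, which you allude to but do not actually set up. In short: correct program, matching the source's method, but not yet a proof.
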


\begin{re} In \cite{DavydychevDe2004}, $\Li_{2,1}(y)$ and $\Li_{3,1}(y)$ are denoted by $S_{1,2}(y)$ and $S_{2,2}(y)$, respectively. More general, for $a,b\in\N$ and $z\in [0,1]$ the function $S_{a,b}(z)$ is defined by
\begin{align*}
S_{a,b}(z):=&\, \frac{(-1)^{a-1+b}}{(a-1)!b!} \int_0^1 \frac{\ln^{a-1}(t)\ln^b(1-zt)}{t}dt\\
=&\,  \frac{1}{(a-1)!b!} \int_0^1  \left(\int_t^1  \frac{dt}{t}\right)^{a-1}  \frac{dt}{t} \left(\int_0^t\frac{z\,dt}{1-zt}\right)^b \\
=&\,  \int_0^1   \left( \frac{dt}{t}\right)^{a} \left(\frac{z\,dt}{1-zt}\right)^b \\
=&\,  \int_0^z   \left( \frac{dt}{t}\right)^{a} \left(\frac{dt}{1-t}\right)^b =\Li_{a+1,1_{b-1}}(z),
\end{align*}
where we have used Chen's iterated integrals above to represent the single-variable multiple polylogarithm, see \cite[Ch.~2]{Zhao2016}.
\end{re}

\begin{thm} We have
\begin{align}
&\sum_{n=1}^\infty \frac{(-1)^{n-1}}{n^3 \binom{2n}{n}}\left(10H_{n}-\frac{3}{n}\right)=\frac{\pi^4}{30},\label{conj2}\\
&\sum_{n=1}^\infty \frac{1}{n^2 \binom{2n}{n}}\left(3H_{n-1}^2+\frac{4}{n}H_{n-1}\right)=\frac{\pi^4}{360}, \label{conj1}\\
&\sum_{n=1}^\infty \frac{(-1)^{n-1}}{n^3 \binom{2n}{n}}\left(H_{2n}+4H_n\right)=\frac{2\pi^4}{75}\label{conj3}.
\end{align}
\end{thm}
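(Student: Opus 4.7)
The plan is to tackle the three identities in the order \eqref{conj1}, \eqref{conj2}, \eqref{conj3}, in roughly increasing order of difficulty, using the three lemmas just stated.

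The identity \eqref{conj1} follows from Akhilesh's lemma alone. The key algebraic observation is
\[
H_{n-1}^2 = H_{n-1}^{(2)} + 2\ze_{n-1}(1,1),
\]
obtained by splitting the square $\left(\sum_{k<n}1/k\right)^2$ into diagonal and off-diagonal parts. Substituting this rewrites the left-hand side as
\[
3\sum_{n=1}^\infty\frac{H_{n-1}^{(2)}}{n^2\binom{2n}{n}} + 2\left(2\sum_{n=1}^\infty\frac{H_{n-1}}{n^3\binom{2n}{n}} + 3\sum_{n=1}^\infty\frac{\ze_{n-1}(1,1)}{n^2\binom{2n}{n}}\right),
\]
which by \eqref{exp-h2-1} and \eqref{exp-h113-1} equals $3\cdot\tfrac{5}{108}\ze(4) + 2\cdot\tfrac{1}{18}\ze(4) = \tfrac{1}{4}\ze(4) = \pi^4/360$.

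For \eqref{conj2}, I would rewrite the left-hand side as
\[
-10\sum_{n=1}^\infty\frac{(-1)^n H_n}{n^3\binom{2n}{n}} + 3\sum_{n=1}^\infty\frac{(-1)^n}{n^4\binom{2n}{n}}
\]
and apply Au's formulas \eqref{para-harmon-exp-ale} and \eqref{para-harmon-exp-alt} directly. The coefficients $-10$ and $3$ are chosen precisely so that the six golden-ratio-dependent contributions --- $\Li_3(\gf)\ln(\gf)$, $\Li_4(\gf^2)$, $\Li_4(\gf)$, $\ze(3)\ln(\gf)$, $\ln^4(\gf)$, and $\pi^2\ln^2(\gf)$ --- cancel pairwise, leaving only $-10\cdot(\pi^4/50) + 3\cdot(7\pi^4/90) = \pi^4/30$.

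The hardest step is \eqref{conj3}. The plan is to specialize the Davydychev--Kalmykov formulas \eqref{AN_Sb1_3} and \eqref{AN_Sb1_4} at $u=-1$. A short calculation gives $y = (3-\sqrt{5})/2 = \gf^2$, and the defining relation $\gf^2+\gf=1$ then yields $1-y=\gf$ and $-y=\gf-1$, placing every polylogarithm argument in the golden-ratio system. Using the decomposition $H_{2n}+4H_n = H_{2n-1}+4H_{n-1}+9/(2n)$, the target sum becomes
\[
-\sum_{n=1}^\infty\frac{(-1)^n H_{2n-1}}{n^3\binom{2n}{n}} - 4\sum_{n=1}^\infty\frac{(-1)^n H_{n-1}}{n^3\binom{2n}{n}} - \tfrac{9}{2}\sum_{n=1}^\infty\frac{(-1)^n}{n^4\binom{2n}{n}},
\]
a linear combination of the right-hand sides of \eqref{AN_Sb1_3} and \eqref{AN_Sb1_4} at $u=-1$ together with \eqref{para-harmon-exp-alt}. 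The main obstacle is simplifying the resulting large collection of weight-four constants --- polylogarithms $\Li_k$ and $\Li_{k,j}$ at $\pm\gf^2$ and $\gf^4$, logarithms of $\gf$, and the Kummer-type integral $H_{-1,0,0,1}(-\gf^2)$ (which enters with total coefficient $-20$). I would invoke Au's package \cite{Au2020,Au2021} to reduce every term to a fixed basis of weight-four golden-ratio constants (for instance $\pi^4$, $\Li_4(\gf)$, $\ze(3)\ln(\gf)$, $\ln^4(\gf)$, $\pi^2\ln^2(\gf)$, $\Li_3(\gf)\ln(\gf)$); one then verifies that all golden-ratio contributions cancel and that the rational $\pi^4$-residue equals $2\pi^4/75$.
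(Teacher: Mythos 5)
Your proofs of \eqref{conj1} and \eqref{conj2} are exactly the paper's: the stuffle identity $H_{n-1}^2=H_{n-1}^{(2)}+2\ze_{n-1}(1,1)$ combined with Akhilesh's two evaluations, and the linear combination $-10$, $3$ of Au's two formulas, respectively; both are correct. For \eqref{conj3} you take a genuinely different, and substantially heavier, route. The paper writes $-(H_{2n}+4H_n)=(H_{n-1}-H_{2n-1})-5H_n+\tfrac1{2n}$ and therefore only ever needs the \emph{difference} of \eqref{AN_Sb1_3} and \eqref{AN_Sb1_4}: in that difference the Kummer-type integral $H_{-1,0,0,1}(-y)$ and every polylogarithm at the arguments $-y$ and $y^2$ cancel identically, and the remaining $H_n$-sum is taken from Au's closed form \eqref{para-harmon-exp-ale}. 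After setting $y=\gf^2$ one is left with a short list of constants at $\gf$ and $\gf^2$ only. Your decomposition $H_{2n}+4H_n=H_{2n-1}+4H_{n-1}+\tfrac{9}{2n}$ instead forms the combination $-1\cdot\eqref{AN_Sb1_4}-4\cdot\eqref{AN_Sb1_3}$, in which $H_{-1,0,0,1}(-\gf^2)$ survives with coefficient $-20$ and $\Li_{3,1}(\gf^4)$, $\Li_{3,1}(-\gf^2)$, $\Li_4(-\gf^2)$, $\Li_{2,1}(-\gf^2)$, $\Li_3(-\gf^2)$, $\Li_2(-\gf^2)$ all survive with nonzero coefficients. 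Two cautions if you pursue this. First, these are weight-four constants at $-\gf^2$ and $\gf^4$ (level-10 rather than level-5 data), and they need not individually lie in the six-element basis you list --- indeed the paper's own formulas already require $\Li_4(\gf^2)$ and $\Li_{3,1}(\gf^2)$ as extra basis elements at level 5 --- so you must reduce the \emph{whole combination} with Au's package rather than each term separately, and you should expect a larger intermediate basis. Second, nothing in your write-up actually performs or certifies that reduction; the cancellation of the golden-ratio contributions is asserted, not verified. The paper's choice of coefficients is engineered precisely so that this entire computation never has to be done, which is the one idea your plan is missing; with it, the only external inputs beyond Lewin's classical $\Li_2(\gf^2)$ and $\Li_3(\gf^2)$ evaluations are the two reductions \eqref{fe-1}--\eqref{fe-2}, for which the paper even supplies human proofs.
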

\begin{proof}
First, applying \eqref{para-harmon-exp-ale} and \eqref{para-harmon-exp-alt} yields equation \eqref{conj2}.
To prove \eqref{conj1}, by applying the stuffle relations (or quasi-shuffle relations, see \cite{Hoffman2000}), we get
\begin{equation*}
H^2_{n-1}=2\ze_{n-1}(1,1)+H_{n-1}^{(2)}.
\end{equation*}
Hence, the \eqref{conj1} can be rewritten as the form
\begin{align*}
&\sum_{n=1}^\infty \frac{1}{n^2 \binom{2n}{n}}\left(3H_{n-1}^2+\frac{4}{n}H_{n-1}\right)=2\left(2\sum_{n=1}^\infty \frac{H_{n-1}}{n^3\binom{2n}{n}}+3\sum_{n=1}^\infty \frac{\ze_{n-1}(1,1)}{n^2\binom{2n}{n}}\right)+3\sum_{n=1}^\infty \frac{H_{n-1}^{(2)}}{n^2\binom{2n}{n}}.
\end{align*}
Then, using \eqref{exp-h2-1} and \eqref{exp-h113-1} gives
\begin{align*}
&\sum_{n=1}^\infty \frac{1}{n^2 \binom{2n}{n}}\left(3H_{n-1}^2+\frac{4}{n}H_{n-1}\right)=\frac2{18}\ze(4)+\frac{3\cdot 5}{108}\ze(4)=\frac1{4}\ze(4)=\frac{\pi^4}{360}.
\end{align*}

Finally, we prove \eqref{conj3}. Observe that
\begin{align}\label{con1-3}
&\sum_{n=1}^\infty \frac{(-1)^{n-1}}{n^3 \binom{2n}{n}}\left(H_{2n}+4H_n\right)=\sum_{n=1}^\infty \frac{(-1)^n}{n^3\binom{2n}{n}}(H_{n-1}-H_{2n-1})-5\sum_{n=1}^\infty \frac{(-1)^n}{n^3\binom{2n}{n}}H_n+\frac1{2}\sum_{n=1}^\infty \frac{(-1)^n}{n^4\binom{2n}{n}}.
\end{align}
The difference of \eqref{AN_Sb1_3} and \eqref{AN_Sb1_4} yields
\begin{align}\label{para-harmon-exp}
\sum_{n=1}^\infty \frac{u^n}{n^3\binom{2n}{n}}(H_{n-1}-H_{2n-1})&=4\Li_{3,1}(y)-4\Li_4(y)+\Li_2^2(y)-4\Li_{2,1}(y)\ln y\nonumber\\
&\quad+2\Li_3(y)\ln y-\frac1{2}\Li_2(y)\ln^2y+4\Li_3(y)\ln(1-y)\nonumber\\
&\quad-2\Li_2(y)\ln y \ln(1-y)-\frac1{6}\ln^3 y\ln(1-y)+\frac1{48}\ln^4y\nonumber\\
&\quad-2\ze(2)\Li_2(y)+\frac1{2}\ze(2)\ln^2 y-2\ze(2)\ln y\ln (1-y)\nonumber\\
&\quad-4\ze(3)\ln(1-y)+2\ze(3)\ln y+\frac{11}{2} \ze(4).
\end{align}
Setting $u=-1$ and $y=\frac{3-\sqrt{5}}{2}=\gf^2$ in \eqref{para-harmon-exp} we find
\begin{align}\label{para-harmon-exp-1}
\sum_{n=1}^\infty \frac{(-1)^n}{n^3\binom{2n}{n}}(H_{n-1}-H_{2n-1})&=4\Li_{3,1}(\gf^2)-4\Li_4(\gf^2)+\Li_2^2(\gf^2)-8\Li_{2,1}(\gf^2)\ln(\gf)\nonumber\\
&\quad+8\Li_3(\gf^2)\ln(\gf)-6\Li_2(\gf^2)\ln^2(\gf)-\ln^4(\gf)\nonumber\\
&\quad-2\ze(2)\Li_2(\gf^2)-2\ze(2)\ln^2(\gf)+\frac{11}{2}\ze(4).
\end{align}
Combining \eqref{para-harmon-exp-ale}, \eqref{para-harmon-exp-alt}, \eqref{con1-3} and \eqref{para-harmon-exp-1},  we get
\begin{align}\label{fe-0}
&\sum_{n=1}^\infty \frac{(-1)^{n-1}}{n^3 \binom{2n}{n}}\left(H_{2n}+4H_n\right)\nonumber\\
&=-8\Li_3(\gf)\ln(\gf)-\frac9{2}\Li_4(\gf^2)+8\Li_4(\gf)+4\Li_{3,1}(\gf^2)+\Li_2^2(\gf^2)\nonumber\\
&\quad-8\Li_{2,1}(\gf^2)\ln(\gf)+8\Li_{3}(\gf^2)\ln(\gf)-6\Li_{2}(\gf^2)\ln^2(\gf)\nonumber\\
&\quad-2\ze(2)\Li_{2}(\gf^2)+\frac4{5}\ze(3)\ln(\gf)-\frac{19}{6}\ln^4(\gf)+\frac{2}{15}\pi^2\ln^2(\gf).
\end{align}
By applying Au's package \cite{Au2021} (also see Remark~\ref{re:humanProof} below), we have
\begin{align}
&\begin{aligned}
&\Li_{2,1}(\gf^2)=\text{MZIteratedIntegral}[{0, \gf^{-2}, \gf^{-2}}]=\ze(3)+\frac{\pi^2}{10}\ln(\gf)-\Li_3(\gf),\label{fe-1}
\end{aligned}\\
&\begin{aligned}
\Li_{3,1}(\gf^2)=\text{MZIteratedIntegral}[{0, 0,\gf^{-2}, \gf^{-2}}]&=\frac{\pi^4}{90}-\frac{\pi^2}{20}\ln^2(\gf)+\frac3{8}\ln^4(\gf)+\frac9{8}\Li_4(\gf^2)\\
&\quad-2\Li_4(\gf)+\frac1{5}\ze(3)\ln(\gf).\label{fe-2}
\end{aligned}
\end{align}
Further, from \cite[(1.20) and (6.13)]{Lewin1981} we have
\begin{align}
\Li_2(\gf^2)=&\, \frac{\pi^2}{15}-\ln^2(\gf),\label{fe-3} \\
\Li_3(\gf^2)=&\, \frac4{5}\ze(3)-\frac2{3}\ln^3(\gf)+\frac2{15}\pi^2\ln(\gf).\label{fe-4}
\end{align}
Thus, substituting \eqref{fe-1}-\eqref{fe-4} into \eqref{fe-0} we can quickly obtain \eqref{conj3}.
\end{proof}

\begin{re}\label{re:humanProof}
We provide here two other direct proofs of \eqref{fe-1}. First, by \cite[Remark 3.18]{Zhao2001} we have
$$
\begin{array} {rl}
{\displaystyle \phantom{\frac12} } \ & \Li_{2,1}(y,x)
={\displaystyle \Li_3^*(x)- \Li_3^*\Bigl(\frac{x-xy}{1-xy}\Bigr)+\Li_3^*(xy)
- \Li_3\Bigl(\frac{y-xy}{1-xy}\Bigr)}\\
 {\displaystyle \phantom{\frac12}}\ & \phantom{\Li_{1,2}(x,y)}
+ \Li_3(y) - \Li_3(xy) -\ln(1-xy) (\Li_2(x) + \Li_2(y)) \\
 {\displaystyle \phantom{\frac12}}\ & \phantom{\Li_{1,2}(x,y)}
{\displaystyle
-\frac12\ln^2\Bigl(\frac{1-x}{1-xy}\Bigr)\ln\Bigl(\frac{1-y}{1-xy}\Bigr),}
\end{array}$$
where
$$
\Li_3^*(x) = \Li_3(1)- \Li_3(1-x)+\Li_2(1)\ln(1-x)-
\frac12\ln(x)\ln^2(1-x).$$
Taking $y=\gf^2$ and using $1-y=\gf$ we see that
\begin{equation*}
\lim_{x\to 1^-} \Li_3^*(x)- \Li_3^*\Bigl(\frac{x-xy}{1-xy}\Bigr)= \Li_2(1)\ln(1-y)=\frac{\pi^2}{6}\ln(\gf).
\end{equation*}
Hence
\begin{align*}
\Li_{2,1}(\gf^2)=\Li_3(1)-\Li_3(\gf)-\ln(\gf)\Li_2(\gf^2)-\ln^3(\gf)+\frac{\pi^2}{6}\ln(\gf) =\ze(3)+\frac{\pi^2}{10}\ln(\gf)-\Li_3(\gf)
\end{align*}
by \eqref{fe-3}. This proves \eqref{fe-1}. Second, setting $k=r=2$ in \cite[Thm. 8]{AM1999} or \cite[Thm. 2.1]{X2020} yields
\begin{align*}
\Li_{2,1}(x)=\ze(3)-\Li_3(1-x)+\ln(1-x)\Li_2(1-x)+\frac1{2}\ln(x)\ln^2(1-x).
\end{align*}
Letting $x=\gf^2$ and noting the fact that (see \cite[(1.20)]{Lewin1981})
\[\Li_2(\gf)=\frac{\pi^2}{10}-\ln^2(\gf)\]
gives
\begin{align*}
\Li_{2,1}(\gf^2)=\ze(3)-\Li_3(\gf)+\ln(\gf)\Li_2(\gf)+\ln^3(\gf)=\ze(3)+\frac{\pi^2}{10}\ln(\gf)-\Li_3(\gf).
\end{align*}
A human proof of \eqref{fe-2} is possible although it is conceivably much more complicated than \eqref{fe-1}. In particular, setting $k=3$ and $r=2$ \cite[Thm. 2.1]{X2020} yields
\begin{align*}
\Li_{3,1}(x)+\Li_{3,1}(1-x)&=\ln(x)\left(\ze(3)-\Li_3(1-x)+\ln(1-x)\Li_2(1-x)\right)\\&\quad+\ze(3,1)+\ln(1-x)\Li_{2,1}(1-x)+\frac{1}{4}\ln^2(x)\ln^2(1-x).
\end{align*}
Letting $x=\gf$ gives
\begin{align}
\Li_{3,1}(\gf)+\Li_{3,1}(\gf^2)=\frac{\pi^4}{360}+\frac{\pi^2}{5}\ln^2(\gf)-\frac1{3}\ln^4(\gf)-2\ln(\gf)\Li_3(\gf)+\frac{11}{5}\ln(\gf)\ze(3).
\end{align}
\end{re}

\medskip

\noindent{\bf Acknowledgement.}  Ce Xu is supported by the National Natural Science Foundation of China (Grant No. 12101008), the Natural Science Foundation of Anhui Province (Grant No. 2108085QA01) and the University Natural Science Research Project of Anhui Province (Grant No. KJ2020A0057). Jianqiang Zhao is supported by the Jacobs Prize from The Bishop's School.

\medskip

\noindent{\bf Disclosure statement.} The authors report there are no competing interests to declare.

\end{document}